\newcommand{\R}{\mathbb{R}}
\newtheorem{theorem}{Theorem}[section]
\newtheorem{lemma}[theorem]{Lemma}
\numberwithin{equation}{section}
\begin{document}
\title{Bobkov's inequality via optimal control theory}
\author[F.~Barthe, P.~Ivanisvili]{F.~Barthe,  P.~Ivanisvili}
\thanks{This paper is  based upon work supported by the National Science Foundation under Grant No. DMS-1440140 while two of the authors were in residence at the Mathematical Sciences Research Institute in Berkeley, California, during the Fall 2017 semester. }
\address{Institut de Math\'ematiques de Toulouse; UMR 5219 \\ Universit\'e de Toulouse; CNRS
\\
 France}
\email{franck.barthe@math.univ-toulouse.fr \textrm{(F.\ Barthe)}}

\address{Department of Mathematics, Princeton University; MSRI; UC Irvine, CA, USA}
\email{paata.ivanisvili@princeton.edu \textrm{(P.\ Ivanisvili)}}

\makeatletter
\@namedef{subjclassname@2010}{
  \textup{2010} Mathematics Subject Classification}
\makeatother
\subjclass[2010]{42B20, 42B35, 47A30}
\keywords{}
\begin{abstract} 
We give the simple proof of Bobkov's inequality using the arguments of dynamical programming principle. As a byproduct of the method we obtain a characterization of optimizers. 
\end{abstract}
\maketitle 
\section{Bobkov's inequality}
Bobkov's inequality~\cite{Bobp} states that 
\begin{align}\label{bob}
\int_{\mathbb{R}^{n}} \sqrt{I^{2}(f)+|\nabla f|^{2}}\, d\gamma^n \geq I\left(\int_{\mathbb{R}^{n}} f\, d\gamma^n \right)
\end{align} 
holds for any smooth $f :\mathbb{R}^{n} \to [0,1]$, where $d\gamma^n(x) = \frac{e^{-|x|^{2}/2}}{(\sqrt{2\pi})^{n}}dx$ is the standard Gaussian measure on $\mathbb{R}^{n}$,   $I(x) = \varphi(\Phi^{-1}(x))$, $\Phi(t) = \gamma^1((-\infty,t])$ and $\varphi(t)=\Phi'(t)$.
We simply write $\gamma$ for $\gamma^1$. 
This functional inequality implies the sharp isoperimetric  inequality for the gaussian measure $\gamma^n$ (\cite{bore75bmig,sudat78ephs,ehrh83seg}), and has led to far-reaching
extensions~\cite{BALE}. Bobkov's original proof of \eqref{bob} relies on a delicate two-point inequality and the central limit theorem. The inequality could be reproved by interpolation along the Ornstein-Uhlenbeck semigroup \cite{Ledoux,BALE} and by stochastic calculus \cite{BAMU}. Actually, \eqref{bob} can be deduced by applying the gaussian isoperimetric inequality (in $\mathbb R^{n+1}$) to the subgraph of the function $\Phi^{-1}(f)$ (but the main interest of \eqref{bob} is to give a more flexible proof of it). The calculation of the gaussian boundary measure of a subgraph can be found in Ehrhard's paper  \cite{E}.

 In this short paper we give a new proof of Bobkov's inequality using the standard dynamical programming principle.  A similar approach was used in \cite{AdCl,Adam} for Log-Sobolev and Hardy type inequalities. 
As a byproduct of the method, we easily obtain a characterization of  smooth optimizers in (\ref{bob}). The next section presents a direct proof, which is based on an explicit solution of a partial differential equation. Explanations about the origin of this PDE, in relation with dynamic programming, are given afterwards.

\section{The proof: Hamilton--Jacobi--Bellman PDE}
Given any $t, p\in \mathbb{R}$,  and $y$ with $0<y<\Phi(t)$, we claim that the following equation 
\begin{align}\label{franck}
\int_{-\infty}^{t} \Phi\big((s-t)a+p\big) \varphi(s) \, ds=y
\end{align}
has a unique $C^{1}$ solution $a=a(t,p,y)$.  Indeed, notice that by Fubini's theorem the left hand side of (\ref{franck}) represents the gaussian measure of the ``truncated halfspaces'', i.e.,  
\begin{align}\label{f2}
\gamma^2 \left( \{(s,u) \in \mathbb{R}^{2} \,  : \, s\leq t \mbox{ and } u \leq  (s-t)a+p\}\right)=y.
\end{align}
Clearly the left hand side of (\ref{f2}) is continuously decreasing in $a$,  when $a \to -\infty$ it tends to $\Phi(t)$, and when $a \to +\infty$ it goes to zero. Since $0<y<\Phi(t)$ we see that there exists a unique solution $a=a(t,p,y)$. The fact that $a \in C^{1}$ follows from the implicit function theorem (see the computations of partial derivatives below). 

\begin{lemma}\label{HJB}
Let 
\begin{align}\label{oprM}
M(t,p,y):=\varphi\left(\frac{p-a(t,p,y)\,t}{\sqrt{1+a^{2}(t,p,y)}}\right) \Phi\left(\frac{t+a(t,p,y)\, p}{\sqrt{1+a^{2}(t,p,y)}}\right)\quad \text{for} \quad p,t \in \mathbb{R}, \; 0<y<\Phi(t). 
\end{align}
We have 
\begin{align}\label{HJB-0}
\sqrt{\varphi^{2}(t) \varphi^{2}(p) - M_{p}^{2}} =M_{t}+\Phi(p) \varphi(t) M_{y}\, ,
\end{align}
where $M_{t}, M_{p}$ and $M_{y}$ denote the partial derivatives.
\end{lemma}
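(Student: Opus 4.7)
The plan is a direct verification using the natural substitution
$$\alpha := \frac{p-at}{\sqrt{1+a^2}}, \qquad \beta := \frac{t+ap}{\sqrt{1+a^2}},$$
so that $M = \varphi(\alpha)\Phi(\beta)$ and $M_\alpha = -\alpha\varphi(\alpha)\Phi(\beta)$, $M_\beta = \varphi(\alpha)\varphi(\beta)$. The crucial algebraic miracle, coming from the rotation invariance of the Gaussian measure (here $(t,p)\mapsto(\beta,\alpha)$ is an orthogonal transformation), is $\alpha^2+\beta^2 = t^2+p^2$, which gives the identity
$$\varphi(t)\varphi(p) = \varphi(\alpha)\varphi(\beta).$$

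First I would compute $a_t, a_p, a_y$ by implicit differentiation of $F(t,p,y,a):=\int_{-\infty}^t \Phi((s-t)a+p)\varphi(s)\,ds - y = 0$. The derivatives $F_y = -1$ and $F_t = \varphi(t)\Phi(p) - a F_p$ are immediate. For $F_p = \int_{-\infty}^t \varphi((s-t)a+p)\varphi(s)\,ds$, I would complete the square in the exponent $s^2+((s-t)a+p)^2$ as a quadratic in $s$; after the Gaussian substitution $u = \sqrt{1+a^2}(s-s_0)$ with $s_0 = -a(p-at)/(1+a^2)$ (so that $t-s_0 = \beta/\sqrt{1+a^2}$), one gets the clean formula
$$F_p = \frac{\varphi(\alpha)\Phi(\beta)}{\sqrt{1+a^2}} = \frac{M}{\sqrt{1+a^2}}.$$
The same substitution applied to $F_a = \int_{-\infty}^t (s-t)\varphi((s-t)a+p)\varphi(s)\,ds$ reduces matters to the Gaussian moment $\int_{-\infty}^\beta (u-\beta)\varphi(u)\,du = -\varphi(\beta)-\beta\Phi(\beta)$, yielding
$$F_a = -\frac{\varphi(\alpha)\bigl(\varphi(\beta) + \beta\Phi(\beta)\bigr)}{1+a^2}.$$

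Next, using the chain rule $M_t = M_t|_a + M_a\, a_t$ (and analogously for $M_p,M_y$) together with $\partial_a\alpha = -\beta/(1+a^2)$, $\partial_a\beta = \alpha/(1+a^2)$, I expect the factor $\varphi(\beta)+\beta\Phi(\beta)$ to cancel systematically and to obtain the clean expressions
$$M_y = -\alpha, \qquad M_p = \frac{a\,\varphi(\alpha)\varphi(\beta)}{\sqrt{1+a^2}}, \qquad M_t = \frac{\varphi(\alpha)\varphi(\beta)}{\sqrt{1+a^2}} + \alpha\,\Phi(p)\varphi(t).$$
With these in hand, the PDE collapses: on the right-hand side
$$M_t + \Phi(p)\varphi(t) M_y = \frac{\varphi(\alpha)\varphi(\beta)}{\sqrt{1+a^2}} > 0,$$
while on the left, using $\varphi(t)\varphi(p)=\varphi(\alpha)\varphi(\beta)$,
$$\varphi^2(t)\varphi^2(p) - M_p^2 = \varphi^2(\alpha)\varphi^2(\beta)\Bigl(1 - \tfrac{a^2}{1+a^2}\Bigr) = \frac{\varphi^2(\alpha)\varphi^2(\beta)}{1+a^2}.$$
Taking the positive square root gives \eqref{HJB-0}.

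The main technical obstacle is purely computational: executing the completion of the square for $F_p$ and $F_a$ and then pushing through the chain-rule simplifications without arithmetic errors. The rotation identity $\alpha^2+\beta^2=t^2+p^2$ and the factorization of $\varphi(\beta)+\beta\Phi(\beta)$ out of $F_a$ and $M_a$ are what make every intermediate expression reduce to the compact forms above; once one trusts these two structural cancellations the verification is essentially mechanical.
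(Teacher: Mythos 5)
Your proposal is correct and follows essentially the same route as the paper: you introduce the same auxiliary variables (your $\alpha,\beta$ are the paper's $P,Q$), compute the same two Gaussian integrals, differentiate implicitly to obtain $a_t,a_p,a_y$, push through the chain rule to arrive at the identical compact expressions for $M_t,M_p,M_y$, and close the argument with the rotation identity $\varphi(t)\varphi(p)=\varphi(\alpha)\varphi(\beta)$. The only difference is cosmetic: you organize the implicit differentiation around $F_p,F_a$ and invoke the relation $F_t=\varphi(t)\Phi(p)-aF_p$, whereas the paper writes out $a_t,a_p,a_y$ directly, but the intermediate quantities and cancellations are the same.
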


\begin{proof}
The derivative of the left-hand side of \eqref{franck} with respect to the variable $a$ is equal to 
\[ \int_{-\infty}^{t} \varphi((s-t)a+p)\varphi(s)(s-t)ds, \]
which is strictly negative. Therefore we can apply the implicit function theorem, and get a function
$a=a(t,p,y)$.
Next we compute the  partial derivatives of $a$. Differentiating  (\ref{franck}) with respect to  $t$  gives 
\begin{align}\label{car1}
\Phi(p)\varphi(t)-a\int_{-\infty}^{t} \varphi((s-t)a+p)\varphi(s)ds + a_{t}\int_{-\infty}^{t} \varphi((s-t)a+p)\varphi(s)(s-t)ds=0.
\end{align}
The latter two integrals can be computed directly:
\begin{align*}
&\int_{-\infty}^{t} \varphi((s-t)a+p)\varphi(s)ds = \frac{1}{\sqrt{1+a^{2}}}\varphi\left(\frac{p-at}{\sqrt{1+a^{2}}}\right) \Phi\left(\frac{t+ap}{\sqrt{1+a^{2}}}\right) = \frac{M}{\sqrt{1+a^{2}}}; \\
&\int_{-\infty}^{t} \varphi((s-t)a+p)\varphi(s)(s-t)ds= \\
=&- \frac{\varphi(\frac{p-at}{\sqrt{1+a^{2}}}) \varphi(\frac{t+ap}{\sqrt{1+a^{2}}})}{1+a^{2}} - \frac{a(p-at)}{(1+a^{2})^{3/2}}\varphi\left(\frac{p-at}{\sqrt{1+a^{2}}}\right)\Phi\left(\frac{t+ap}{\sqrt{1+a^{2}}}\right) - t \frac{M}{\sqrt{1+a^{2}}}=
\\=&-\frac{\varphi(\frac{p-at}{\sqrt{1+a^{2}}}) \varphi(\frac{t+ap}{\sqrt{1+a^{2}}})}{1+a^{2}}-\frac{(t+ap)}{(1+a^{2})^{3/2}}M. 
\end{align*}
These formulas suggest to introduce two auxiliary functions:
\begin{align}\label{ptPQ}
P:=\frac{p-at}{\sqrt{1+a^{2}}} \quad \text{and}  \quad Q:=\frac{t+ap}{\sqrt{1+a^{2}}}.
\end{align}
Then $M(t,p,y) = \varphi(P) \Phi(Q)$,
and the latter two integrals become
\begin{align}
&\int_{-\infty}^{t} \varphi((s-t)a+p)\varphi(s)ds =\frac{\varphi(P)\Phi(Q)}{\sqrt{1+a^2}}\label{pir1}\\
&\int_{-\infty}^{t} \varphi((s-t)a+p)\varphi(s)(s-t)ds= 
 -\frac{\varphi(P) (\varphi(Q)+Q \Phi(Q))}{1+a^{2}}.\label{meor2}
\end{align}
Thus using (\ref{car1}), (\ref{pir1}) and (\ref{meor2}) we obtain  
\begin{align*}
a_{t} = \frac{\Phi(p)\varphi(t)-a \int_{-\infty}^{t} \varphi((s-t)a+p)\varphi(s)ds}{  -\int_{-\infty}^{t} \varphi((s-t)a+p)\varphi(s)(s-t)ds}=(1+a^{2})\frac{\Phi(p)\varphi(t) - \frac{a}{\sqrt{1+a^{2}}}\varphi(P)\Phi(Q)}{\varphi(P)(\varphi(Q)+Q\Phi(Q))}.
\end{align*}
In a similar way we compute 
\begin{align*}
a_{y} =\frac{1}{\int_{-\infty}^{t} \varphi((s-t)a+p)\varphi(s)(s-t)ds} = \frac{-(1+a^{2})}{\varphi(P)(\varphi(Q)+Q\Phi(Q))},
\end{align*}
and 
\begin{align*}
a_{p}=\frac{-\int_{-\infty}^{t} \varphi((s-t)a+p)\varphi(s)ds}{\int_{-\infty}^{t} \varphi((s-t)a+p)\varphi(s)(s-t)ds} = \frac{\Phi(Q) \sqrt{1+a^{2}}}{\varphi(Q)+Q\Phi(Q)}.
\end{align*}

Now let us compute the partial derivatives of $M = \varphi(P) \Phi(Q)$. First we compute the partial derivatives  of $P$ and $Q$. We have 
\begin{align*}
&P_{t} = \frac{\partial }{\partial t} \left(\frac{p-at}{\sqrt{1+a^{2}}}\right) = -\frac{a}{\sqrt{1+a^{2}}}  -\frac{a_{t}}{1+a^{2}}Q; \quad Q_{t} = \frac{1}{\sqrt{1+a^{2}}} + \frac{a_{t}}{1+a^{2}}P;\\
&P_{p} = \frac{1}{\sqrt{1+a^{2}}} - \frac{a_{p}}{1+a^{2}}Q; \quad Q_{p} = \frac{a}{\sqrt{1+a^{2}}} + \frac{a_{p}}{1+a^{2}}P;\\
&P_{y} = \frac{-a_{y}}{1+a^{2}}Q; \quad Q_{y} = \frac{a_{y}}{1+a^{2}}P.
\end{align*}
Therefore we have 
\begin{align*}
&M_{t} = \frac{aP\varphi(P)\Phi(Q) + \varphi(P)\varphi(Q)}{\sqrt{1+a^{2}}}+\frac{P\varphi(P)a_{t}}{1+a^{2}}(Q\Phi(Q) + \varphi(Q))=\frac{\varphi(P) \varphi(Q)}{\sqrt{1+a^{2}}} + P \varphi(t) \Phi(p); \\
&M_{p} = \frac{-P\varphi(P)\Phi(Q)+\varphi(P)\varphi(Q)a}{\sqrt{1+a^{2}}} + \frac{a_{p}P\varphi(P)}{1+a^{2}}(Q\Phi(Q)+\varphi(Q)) = \frac{\varphi(P)\varphi(Q) a}{\sqrt{1+a^{2}}}; \\
&M_{y} = (\varphi(P) \Phi(P))_{y}=-P\varphi(P) \Phi(Q) P_{y} + \varphi(p) \varphi(Q) Q_{y} = 
\frac{a_{y}}{1+a^{2}}\varphi(P) P ( Q\Phi(Q)+\varphi(Q))=-P. 
\end{align*}
Thus
\begin{align}\label{bro}
M_{t}+\Phi(p)\varphi(t) M_{y} = \frac{\varphi(P)\varphi(Q)}{\sqrt{1+a^{2}}}, \quad \varphi^{2}(t) \varphi^{2}(p) - M_{p}^{2} = \varphi^{2}(P) \varphi^{2}(Q)  \frac{1}{1+a^{2}},
\end{align}
where in the last equality we have used that $\varphi(p) \varphi(t) = \varphi(P) \varphi(Q)$, a direct consequence of \eqref{ptPQ}. Identities in (\ref{bro}) imply (\ref{HJB-0}), and thereby the lemma is proved. 

Let us point out, for further use, that the latter identity satisfied by $\varphi$ gives that
\begin{align}\label{Mp}
M_p=\frac{a}{\sqrt{1+a^2}} \varphi(p)\varphi(t). 
\end{align}
\end{proof}
\begin{lemma}\label{endpoints}
Let $M$ be defined as in (\ref{oprM}), and let $f : \mathbb{R} \to (0,1)$ be any $C^{1}$ smooth function.
 Then 
\begin{align}
&\lim_{t \to -\infty} M\left(t,\Phi^{-1}(f(t)),\int_{-\infty}^{t} f d\gamma\right) = 0;\label{minus}\\
&\lim_{t \to \infty} M\left( t,\Phi^{-1}(f(t)),\int_{-\infty}^{t} f d\gamma\right) = I\left(\int_{\mathbb{R}} f d\gamma \right). \label{plus}
\end{align} 
\end{lemma}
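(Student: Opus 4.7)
The plan is to leverage the explicit form $M=\varphi(P)\Phi(Q)$ and two identities already noted in the proof of Lemma~\ref{HJB}: the rotational identity $\varphi(p)\varphi(t)=\varphi(P)\varphi(Q)$, equivalently $P^2+Q^2=p^2+t^2$, and the elementary estimate
\[
\left|Q-\frac{t}{\sqrt{1+a^2}}\right|=\frac{|ap|}{\sqrt{1+a^2}}\leq |p|,
\]
which yields $Q\leq |p|$ when $t\leq 0$ and $Q\geq -|p|$ when $t\geq 0$. Both limits \eqref{minus} and \eqref{plus} will then follow from a compactness argument applied to the rotated coordinates $P,Q$ along arbitrary sequences $t_n\to\pm\infty$.

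For \eqref{minus} I would argue by contradiction along subsequences. Setting $p_n=\Phi^{-1}(f(t_n))$ and $y_n=\int_{-\infty}^{t_n}f\,d\gamma$, extract a subsequence so that $P_n\to P^*$ and $Q_n\to Q^*$ in $[-\infty,+\infty]$. The product $M_n=\varphi(P_n)\Phi(Q_n)$ fails to converge to $0$ only when $P^*\in\mathbb R$ and $Q^*\in(-\infty,+\infty]$; but $P_n^2+Q_n^2=p_n^2+t_n^2\geq t_n^2\to\infty$ excludes $Q^*$ finite, so the only dangerous possibility is $P^*\in\mathbb R$ with $Q^*=+\infty$. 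Since $t_n<0$, the slab bound gives $Q_n\leq |p_n|$, hence $|p_n|\to +\infty$ at least as fast as $Q_n$ and $Q_n^2\leq p_n^2$. But then $P_n^2=p_n^2+t_n^2-Q_n^2\geq t_n^2\to\infty$, contradicting $P^*\in\mathbb R$. All remaining cases give $M_n\to 0$ immediately.

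For \eqref{plus} I would first identify $P_t$ via the classical gaussian identity $\int_{\mathbb R}\Phi(cs+d)\varphi(s)\,ds=\Phi(d/\sqrt{1+c^2})$, applied with $c=a_t$ and $d=p_t-a_tt$, which gives $\int_{\mathbb R}\Phi((s-t)a_t+p_t)\varphi(s)\,ds=\Phi(P_t)$. Combined with \eqref{franck},
\[
0\leq \Phi(P_t)-y_t=\int_t^{\infty}\Phi((s-t)a_t+p_t)\varphi(s)\,ds\leq 1-\Phi(t),
\]
and the right-hand side tends to $0$ as $t\to+\infty$. By monotone convergence $y_t\to \bar f:=\int_{\mathbb R} f\,d\gamma\in(0,1)$ (strict inclusion from $0<f<1$), so $P_t\to\Phi^{-1}(\bar f)$. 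The identity $P_t^2+Q_t^2=p_t^2+t^2$ then forces $Q_t^2\to+\infty$, and for $t^2>P_t^2$ it also gives $Q_t^2>p_t^2$, i.e.\ $|Q_t|>|p_t|$; combined with $Q_t\geq -|p_t|$ (the $t>0$ form of the slab bound), this forces $Q_t>0$ eventually, hence $Q_t\to+\infty$. Plugging back yields $M_t=\varphi(P_t)\Phi(Q_t)\to \varphi(\Phi^{-1}(\bar f))=I(\bar f)$.

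The main subtlety I anticipate is that neither the implicit multiplier $a=a(t,p,y)$ nor the value $p_t=\Phi^{-1}(f(t))$ need stay bounded as $t\to\pm\infty$ (since $f(t)$ may approach $0$ or $1$), so I cannot reduce the analysis to any comfortable "$a$ bounded" or "$p_t$ convergent" situation. The argument therefore has to rest purely on the two geometric constraints on $(P,Q)$ — the sphere identity $P^2+Q^2=p^2+t^2$ and the slab bound $|Q-t/\sqrt{1+a^2}|\leq|p|$ — which together pin down and rule out the single dangerous limiting configuration in each endpoint.
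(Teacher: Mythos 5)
Your proof is correct, and it follows the same overall strategy as the paper — factorize $M=\varphi(P)\Phi(Q)$; for \eqref{plus}, first pin down $P_t\to\Phi^{-1}\!\left(\int f\,d\gamma\right)$ by combining \eqref{franck} with the Gaussian half-plane identity $\int_{\mathbb R}\Phi((s-t)a+p)\varphi(s)\,ds=\Phi(P)$; then show $Q$ runs off to the appropriate infinity — but the mechanism for controlling $Q$ is different. The paper rewrites $Q=t\sqrt{1+a^2}+Pa$, so that once $|P|$ is bounded (by $A$ in the $t\to-\infty$ case, by $\Theta$ on $[0,\infty)$ in the $t\to+\infty$ case) one gets the uniform, $a$-free estimate $Q\le(t+A)\sqrt{1+a^2}\le t+A$ for $t<-A$, resp.\ $Q\ge(t-\Theta)\sqrt{1+a^2}\ge t-\Theta$ for $t>\Theta$; this turns \eqref{minus} into a clean $\varepsilon$--$A$ dichotomy. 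You instead rely on the pair of constraints $P^2+Q^2=p^2+t^2$ (equivalent, as you note, to the identity $\varphi(P)\varphi(Q)=\varphi(p)\varphi(t)$ already recorded in the proof of Lemma~\ref{HJB}) and the slab bound $|Q-t/\sqrt{1+a^2}|\le|p|$ (just the triangle inequality applied to the definition of $Q$), and you argue by contradiction along subsequences. Both are valid; the paper's route is slightly shorter and gives explicit quantitative bounds, while yours makes the planar geometry of $(P,Q)$ more visible and avoids rearranging $Q$ by hand. The remaining ingredients — $y_t\to\int f\,d\gamma\in(0,1)$, $0\le\Phi(P_t)-y_t\le 1-\Phi(t)\to 0$, and the identification of the limit as $I\!\left(\int f\,d\gamma\right)$ — coincide with the paper's.
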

\begin{proof}
Here we set (omitting variables) $p=p(t):= \Phi^{-1}(f(t))$, $y=y(t):=\int_{-\infty}^{t} f d\gamma$
and 
\[M=M(t,p,y)=\varphi\left(\frac{p-at}{\sqrt{1+a^2}}\right) \Phi\left(\frac{t+ap}{\sqrt{1+a^2}}
 \right),\] 
 where $a=a(t,p,y)$ is defined implicitly by \eqref{franck}.

First we check (\ref{minus}). Let $\varepsilon>0$ be an arbitrary positive number.
Then there exists $A$ such that: $|u|\ge A \Longrightarrow \varphi(u)\le \varepsilon$. If $\left| \frac{p-at}{\sqrt{1+a^2}}\right|\ge A$ then clearly $|M|\le \varepsilon$.
 On the contrary, if  $\theta=\theta(t):=\frac{p-at}{\sqrt{1+a^{2}}}$ verifies $|\theta(t)|< A$ then 
 \[  \frac{t+ap}{\sqrt{1+a^2}}=t\sqrt{1+a^2}+\theta a \le t\sqrt{1+a^2}+A |a|\le (t+A)\sqrt{1+a^2}, \]
which tends to $-\infty$ when $t\to -\infty$. Therefore, for $t$ sufficiently negative,
$$|M|\le \Phi\left(  \frac{t+ap}{\sqrt{1+a^2}}\right)\le \varepsilon.$$ Since
$\varepsilon>0$ was arbitrary, we have shown that 
\begin{align*}
\lim_{t \to -\infty} M\left(t,\Phi^{-1}(f(t)),\int_{-\infty}^{t} f d\gamma\right)  =  0. 
\end{align*}

To verify (\ref{plus}) we notice that  (\ref{franck}) implies 
\begin{align*}
y+\int_{t}^{\infty}\Phi((s-t)a+p)\varphi(s)ds = \int_{-\infty}^{\infty}\Phi((s-t)a+p)\varphi(s)ds = \Phi\left( \frac{p-at}{\sqrt{1+a^{2}}}\right).
\end{align*}
Therefore we obtain 
\begin{align*}
\lim_{t \to \infty}\frac{p-at}{\sqrt{1+a^{2}}}=\lim_{t \to \infty} \Phi^{-1}\left(\int_{-\infty}^{t} fd\gamma + \int_{t}^{\infty} \Phi((s-t)a+p)\varphi(s)ds \right) =\Phi^{-1}\left(\int_{-\infty}^{\infty} fd\gamma \right)
\end{align*}
regardless of the values of the function $a$. Since $f$ takes values in $(0,1)$, we have proved
that the function $\theta(t)=\frac{p-at}{\sqrt{1+a^{2}}}$ has a (finite) limit when $t$ tends to $+\infty$ and therefore, $|\theta|$ is bounded on $[0, +\infty)$ by a constant $\Theta$.
By definition $p=ta+\theta \sqrt{1-a^2}$, thus 
\[  \frac{t+ap}{\sqrt{1+a^2}}=t\sqrt{1+a^2}+\theta a \ge t\sqrt{1+a^2}-\Theta |a|\ge (t-\Theta)\sqrt{1+a^2}, \]
tends to $+\infty$ when $t\to +\infty$ (recall that $\Theta$ is a constant).
 Thus 
\begin{align*}
\lim_{t \to \infty} M\left( t,\Phi^{-1}(f(t)),\int_{-\infty}^{t} f d\gamma\right)= \lim_{t \to \infty}\varphi \left(\frac{p-at}{\sqrt{1+a^{2}}}\right)\Phi\left(\frac{t+ap}{\sqrt{1+a^{2}}}\right)=\varphi\left(\Phi^{-1}\left(\int_{-\infty}^{\infty}fd\gamma \right) \right).
\end{align*}
\end{proof}
\subsection{The proof of Bobkov's inequality}\label{karoche}
Let $B(t,x,y):=M(t,\Phi^{-1}(x),y)$ for $t \in \mathbb{R}$, $x \in (0,1)$ and $0<y<\Phi(t)$. Lemma~\ref{HJB} implies that 
\begin{align}\label{kac1}
I(x) \sqrt{\varphi^{2}(t)-B_{x}^{2}} = B_{t} + x \varphi(t) B_{y}.
\end{align}
One can easily check by studying the derivative in $v$ that 
\begin{align}\label{kac2}
\min_{v \in \mathbb{R}} \left\{ \varphi(t) \sqrt{I^{2}(x)+v^{2}} - v B_{x}  \right\} = I(x) \sqrt{\varphi^{2}(t)-B_{x}^{2}},
\end{align}
and that the minimum is  attained only when $v=\frac{I(x) B_{x}}{\sqrt{\varphi^{2}(t)-B_{x}^{2}}}$.
Therefore (\ref{kac1}) and (\ref{kac2}) imply that for any $v \in \mathbb{R}$ we have 
\begin{align}\label{kac3}
\varphi(t) \sqrt{I^{2}(x)+v^{2}} \geq B_{t}(t,x,y)+B_{x}(t,x,y) v + B_{y}(t,x,y) x \varphi(t),
\end{align}
where the inequality is strict when $v \neq\frac{I(x) B_{x}}{\sqrt{\varphi^{2}(t)-B_{x}^{2}}}$.

Now  take any $f \in C^{1}(\mathbb{R})$ with values in $(0,1)$ such that $\int_{\mathbb{R}}\sqrt{I(f)^{2}+(f')^{2}}d\gamma<\infty$ (otherwise there is nothing to prove). 
 Applying (\ref{kac3}) for  $x=f(t)$, $v=f'(t)$ and $y=\int_{-\infty}^t f\varphi$, we get: 
 \begin{align*}
 \Psi(t):=\sqrt{I^{2}(f(t))+(f'(t))^{2}} \varphi(t) - \frac{d}{dt}B\left( t,f(t), \int_{-\infty}^{t} f \varphi \right) \geq 0 \quad \text{for all} \quad t \in \mathbb{R}.
 \end{align*}
Therefore 
\begin{align*}
&\int_{-T}^{T}  \sqrt{I^{2}(f(t))+(f'(t))^{2}} \varphi(t)dt -M\left(T,\Phi^{-1}(f(T)),\int_{-\infty}^{T} f d\gamma\right)+
M\left(-T,\Phi^{-1}(f(-T)),\int_{-\infty}^{-T} f d\gamma\right)\\
&=\int_{-T}^{T} \left[ \sqrt{I^{2}(f(t))+(f'(t))^{2}} \varphi(t) - \frac{d}{dt}B\left( t,f(t), \int_{-\infty}^{t} f \varphi \right)  \right]dt =\int_{-T}^{T}\Psi(t) dt \geq 0. 
\end{align*} 

Finally sending $T \to \infty$ and using Lemma~\ref{endpoints} we obtain 
\begin{align}\label{kac4}
\int_{\mathbb{R}} \sqrt{I^{2}(f(t))+(f'(t))^{2}} \varphi(t) dt  -  I\left(\int_{\mathbb{R}} f \varphi \right)=\lim_{T \to \infty} \int_{-T}^{T}\Psi(t)dt \geq 0
\end{align}
Using standard approximation arguments we can extend (\ref{kac4}) to any  $C^{1}(\mathbb{R})$ smooth $f$ with values in $[0,1]$. This proves Bobkov's inequality (\ref{bob}) in  dimension $n=1$. To obtain (\ref{bob}) in an arbitrary dimension we use the standard tenzorization argument~\cite{BAMU}. Let us illustrate the argument for $n=2$. Take any $C^{1}(\mathbb{R}^{2})$ smooth $g(x, y)$ with values in $[0,1]$. We have 
\begin{align}
I\left(\int_{\mathbb{R}} \int_{\mathbb{R}}  g(x,y) d\gamma(x) d\gamma(y)\right) &\stackrel{(\ref{kac4})}{\leq} \int_{\mathbb{R}} \sqrt{I^{2}\left(\int_{\mathbb{R}}g(x,y)d\gamma(y) \right)+\left( \int_{\mathbb{R}}g_{x}(x,y)d\gamma(y)\right)^{2}} d\gamma(x) \nonumber\\
&\stackrel{(\ref{kac4})}{\leq} \int_{\mathbb{R}} \sqrt{ \left(\int_{\mathbb{R}} \sqrt{I^{2}(g)+g_{y}^{2}} d\gamma(y) \right)^{2}+\left( \int_{\mathbb{R}}g_{x}(x,y)d\gamma(y)\right)^{2}} d\gamma(x) \nonumber\\
\stackrel{\text{minkowski}}{\leq}\int_{\mathbb{R}}&\int_{\mathbb{R}} \sqrt{I^{2}(g) + g_{x}^{2}+g_{y}^{2}} d\gamma(x) d\gamma(y) = \int_{\mathbb{R}^{2}} \sqrt{I^{2}(g)+|\nabla g|^{2}} d\gamma^2. \label{franck2}
\end{align}
This finishes the proof of Bobkov's inequality. 
\subsection{Optimizers} Assume that a $C^1$ function $f:\R\to (0,1)$ is such that Bobkov's inequality \eqref{bob} is an equality. Then the left hand side of  \eqref{kac4} is zero. Since 
$\Psi$ is a non-negative continuous function,  it follows that $\Psi(t)=0$ for all $t \in \mathbb{R}$. This means that \eqref{kac3} was an equality when we applied it to prove that $\Psi\ge0$, therefore  $v=\frac{I(x) B_{x}}{\sqrt{\varphi^{2}(t)-B_{x}^{2}}}$ where $x=f(t)$, $v=f'(t)$, and $B_x$ stands for $B_x\big(t,f(t),\int_{-\infty}^t f\varphi\big)$.  Hence for all $t\in \mathbb R$, 
\begin{align*}
\frac{f'(t)}{I(f(t))} = \frac{B_{x}}{\sqrt{\varphi^{2}(t)-B_{x}^{2}}}.
\end{align*}
Let us rewrite this equation, by setting
 $h(t):=\Phi^{-1}(f(t))$ and using as before $M(t,p,y):=B(t,\Phi(p),y)$. Since
 $h'(t)=\frac{f'(t)}{I(f(t))}$ and $M_p(t,p,y)=\varphi(p) B_x(t,\Phi(p),y)$ we get
 after  simplification
\begin{align*}
h'(t) =\frac{M_{p}\Big(t,h(t),\int_{-\infty}^t \Phi(h)\varphi\Big)}{\sqrt{\varphi(t)^{2}\varphi(h(t))^{2}-M_{p}^{2}\Big(t,h(t),\int_{-\infty}^t \Phi(h)\varphi\Big)}}\stackrel{(\ref{Mp})}{=}\,a\Big(t,h(t),\int_{-\infty}^t \Phi(h)\varphi\Big).
\end{align*}
Since $a$ is  $C^1$, and so is $h$ by hypothesis, this equation shows that $h$ is $C^2$.
Using (\ref{franck})  we obtain 
\begin{align}\label{ura2}
\int_{-\infty}^{t} \Phi((s-t)h'(t)+h(t))\varphi(s) ds = \int_{-\infty}^{t}\Phi(h(s)) \varphi(s) ds.
\end{align}
After differentiation of  (\ref{ura2}) in $t$ and some simplifications we obtain 
\begin{align*}
h''(t) \int_{-\infty}^{t}\varphi((s-t)h'(t)+h(t))\varphi(s)(s-t) ds=0.
\end{align*}
The latter equality can hold if and only if $h''=0$, and thereby $f(t) = \Phi(ut+v)$ for some constants $u,v \in \mathbb{R}$. 

One can extend this result to higher dimensions by  showing that all $C^1$ functions $f:\mathbb R^n\to (0,1)$ which reach equality in Bobkov's inequality are of the form $f=\Phi\circ \ell$ for some linear form $\ell$. Indeed, for this we need to carefully examine the equality cases in the tensorization argument. Let us again illustrate the argument for $n=2$. Take any $g \in C^{1}(\mathbb{R}^{2})$ which takes values in $(0,1)$, and which achieves the equality in Bobkov's inequality.  Equality on the second step in the chain of inequalities (\ref{franck2}) implies that $g(x,y)=\Phi(yu(x)+v(x))$ for some functions $u(x), v(x)$. Since $g \in C^{1}$ and $\Phi$ is a smooth diffeomorphism we see that $u, v \in C^{1}(\mathbb{R})$. On the other hand equality in the part of Minkowski inequality (\ref{franck2}) implies that  
\begin{align*}
 \sqrt{I^{2}(g) + g_{y}^{2}} = k(x) g_{x}(x,y) 
\end{align*}
for a nonvanishing function $k(x)$. Simplifying the latter equality we obtain 
$$
\sqrt{1+u(x)^{2}} = k(x) (yu'(x)+v'(x)) \quad \text{for all} \quad  x,y \in \mathbb{R}.
$$
It follows that $u(x)=C_{1}$ is a constant, i.e, $g(x,y) = \Phi(yC_{1}+v(x))$. Repeating the same reasonings in a different order for variables $x,y$ one obtains that  $g(x,y)=\Phi(xC_{2}+\tilde{v}(y))$, and thereby $yC_{1}+v(x)=xC_{2}+\tilde{v}(y)$ for all $x,y \in \mathbb{R}$. Then it easily follows that $g(x,y)=\Phi(xC_{2}+yC_{2}+C_{3})$ for some constants $C_{1}, C_{2}$ and $C_{3}$.

Clearly,  these functions,  $f  = \Phi\circ \ell$ for some linear $\ell$, 
do give equality cases (the subgraph of $\Phi^{-1}\circ f=\ell$ is a half-space, which gives equality in the Gaussian isoperimetric inequality).
 However our approach  at the current stage is not well developed. Carlen and Kierce \cite{CK} have studied equality cases in the natural larger class of functions with bounded variations, where additional equality cases are given by indicator functions of half-spaces.

\section{Concluding remarks}
We briefly sketch to the reader how the argument of optimal control theory works in general. Suppose we would like to 
 maximize   the quantity  
\begin{align}\label{optim0}
\int_{\mathbb{R}}F\big(t,f(t), f'(t)\big) \, dt
\end{align} 
in terms of $\int_{\mathbb{R}} H(t,f(t))dt$ where $F$ and $H$ are some given functions,   $f$ is a test function from a {\em sufficiently nice class}  so that all the expressions involved are well defined. Clearly this means that we would like to solve the following optimization problem 
\begin{align*}
R(y):= \sup_{f} \left\{ \int_{\mathbb{R}}F\big(t,f(t), f'(t)\big)\,dt\; :\; \int_{\mathbb{R}} H\big(t,f(t)\big)\, dt = y \right\}.
\end{align*}
Unfortunately the function $R(y)$ may not obey good properties, for example it is unclear how to find the corresponding ODE that $R(y)$ would satisfy.  Therefore, following the optimal control theory approach,   we should  introduce some extra variables, namely, we should first consider a more general optimization problem
\begin{align}\label{OOP}
B(t,x,y) :=\sup_{f} \left\{ \int_{-\infty}^{t} F\big(s,f(s), f'(s)\big)\, ds\; : \; f(t)=x, \; \int_{-\infty}^{t} H\big(s,f(s)\big) \,ds=y \right\}.
\end{align}
Then the limit value $\sup_{x} \lim_{t \to \infty} B(t,x,y)$ would be a good candidate for $R(y)$. On the other hand  using the standard Bellman principle (see for example \cite{young1969}) one can show that 
\begin{align}\label{BP}
F(t,x,v)\leq B_{t}(t,x,y)+B_{x}(t,x,y)v+B_{y}(t,x,y)H(t,x)
\end{align}
for all $v \in\mathbb{R}$. Indeed, take any $(t,x,y)$ and assume $f^{*}(s)$ optimizes (assume it exists) the right hand side of (\ref{OOP}) on the interval  $(-\infty,t]$ with fixed $f^{*}(t)=x$ and $\int_{-\infty}^{t} H(s,f^{*}(s))ds=y$, then take a small $\varepsilon >0$, any $v\in \mathbb{R}$, and construct a new candidate on $(-\infty,t+\varepsilon]$, namely, 
\begin{align*}
\tilde{f}(s) = 
\begin{cases}
f^{*}(s), \quad  s \leq t;\\
f^{*}(t)+v(s-t), \quad   s \in [t,t+\varepsilon]. 
\end{cases}
\end{align*}
Then 
\begin{align*}
&B\left(t+\varepsilon,x+v \varepsilon, y+\int_{t}^{t+\varepsilon} H\big(s,x+v(s-t)\big)ds\right)=B\left(t+\varepsilon, \tilde{f}(t+\varepsilon), \int_{-\infty}^{t+\varepsilon} H\big(s,\tilde{f}(s)\big)ds\right) \geq\\
&\int_{-\infty}^{t+\varepsilon} F\big(s,\tilde{f}(s),\tilde{f}'(s)\big)ds = 
B(t,x,y)+\int_{t}^{t+\varepsilon}F\big(s,x+v(s-t),v\big)ds.
\end{align*}   
Subtracting $B(t,x,y)$ from both sides of the latter inequality,  dividing by $\varepsilon$ and sending $\varepsilon$ to zero we arrive at (\ref{BP}). Here we are omitting several details and assumptions, for example, $B$ does not have to be differentiable. 

 On the other hand if one finds any function $\tilde{B}(t,x,y)$ such that (\ref{BP}) holds with $\tilde{B}$ instead of $B$, and $\tilde{B}$ has the additional property that 
 $$
 \lim_{t \to -\infty} \tilde{B}\left( t,f(t),\int_{-\infty}^{t} H\big(s,f(s)\big)ds\right)=0,
$$ 
  then one automatically obtains the bound  $\tilde{B}\geq B$.   Indeed, take $f(t)$, and notice that (\ref{BP}) for $\tilde{B}$ implies  
\begin{align*}
F\big(s,f(s),f'(s)\big)\leq \frac{d}{ds} \tilde{B}\left(s,f(s), \int_{-\infty}^{s} H\big(u,f(u)\big)du\right).
\end{align*}  
Now integrating  in $s$ on the ray  $(-\infty,t]$ we obtain that 
\begin{align}\label{least1}
B \leq \tilde{B}.
\end{align}
So we see that the problem of solving (\ref{OOP}) boils down to finding solutions of (\ref{BP}). We can optimize (\ref{BP}) in $v$, i.e., 
\begin{align}\label{BEN}
\sup_{v \in \mathbb{R}} \left\{ F(t,x,v) - B_{x}(t,x,y)v\right\} \leq B_{t}(t,x,y) +B_{y}(t,x,y)H(t,x)
\end{align}
Since $B$ should be the least (\ref{least1}) such possible solution it is quite natural to expect that in fact we should have equality in (\ref{BEN}) instead of inequality. Thus we arrive to the first order fully nonlinear PDE, the so called Hamilton--Jacobi--Bellman PDE, which can be solved by the methods of characteristics.   

To summarize we should mention that the function $B$ that we found in Section~\ref{karoche}  is the solution of the following optimization problem\footnote{Here we have infimum  instead of supremum but the reader can notice that all the reasonings described above will repeat absolutely in the same way except all inequalities will be reversed and $\sup$ in (\ref{BEN}) will be replaced by $\inf$.}
\begin{align}\label{BBE}
B(t,x,y) = \inf_{f \in C^{1}} \left\{ \int_{-\infty}^{t} \sqrt{I^{2}(f(s))+(f'(s))^{2}} \varphi(s)ds\; :\; f(t)=x, \; \int_{-\infty}^{t} f(s)\varphi(s)ds=y \right\}.
\end{align} 
Next we made a shortcut in solving (\ref{BEN}), for example, one can guess from the Euler--Lagrange equation that the optimizers in (\ref{BBE}) should be $f(s)=\Phi(as+b)$ for two arbitrary constants $a,b \in \mathbb{R}$ (on can also argue that global extremizers $f$ in Bobkov's inequality should be such that the subgraph of $\Phi^{-1}\circ f$ is a half-space, for which the Gaussian isoperimetric inequality is tight). We can use this information in order to immediately recover the function $B(t,x,y)$. Indeed, first we find $a=a(t,x,y)$ and $b=b(t,x,y)$ such that $\Phi(at+b)=x$, and $\int_{-\infty}^{t}\Phi(as+b)\varphi(s)ds=y$. Plugging $f(s):=\Phi(a(t,x,y)s+b(t,x,y))$ into the functional of the right hand side in (\ref{BBE}) recovers the function $B(t,x,y)$.

\end{document}